\newtheorem{theorem}{Theorem}[section]
\newtheorem{lemma}[theorem]{Lemma}
\newtheorem{proposition}[theorem]{Proposition}
\newtheorem{definition}[theorem]{Definition}
\newtheorem{example}[theorem]{Example}
\newcommand{\R}{\mathbb R}
\newcommand{\N}{\mathbb N}
\begin{document}

\title[Globally subanalytic  CMC surfaces in $\R^3$]{Globally subanalytic  CMC surfaces in $\R^3$}
\author{J. L. Barbosa}
\address{J. ~L. ~Barbosa, Rua Carolina Sucupira 723 ap 2002,
60140-120 Fortaleza-CE, Brazil} \email{joaolucasbarbosa@gmail.com}

\author{L. Birbrair}
\address{L. ~Birbrair and A. ~Fernandes, Departamento de Matem\'atica,Universidade Federal do Cear\'a Av.
Humberto Monte, s/n Campus do Pici - Bloco 914, 60455-760
Fortaleza-CE, Brazil} 
\email{birb@ufc.br}
 \email{alexandre.fernandes@ufc.br}

\author{M. do Carmo}
\address{M. ~do Carmo, Instituto Nacional de Matem\'atica Pura e Aplicada IMPA
Estrada Dona Castorina 110, 22460-320
Rio de Janeiro-RJ, Brazil} \email{manfredo@impa.br}

\author{A. Fernandes}

\keywords{CMC surfaces, Globally Subanalytic Sets}
\subjclass[2010]{53C42; 53A10 }
\thanks{The authors were partially supported by CNPq-Brazil}

\begin{abstract}
We prove that globally subanalytic nonsingular CMC surfaces of $\R^3$ are only planes, round spheres or right circular cylinders.   
\end{abstract}

\maketitle

\section{Introduction}
 
The question of describing the surfaces of constant mean curvature (CMC surfaces) is known in  Differential Geometry since the classical papers of Hopf \cite{H} and Alexandrov \cite{A}. In the preprint \cite{BC},  Barbosa and do Carmo showed that the connected nonsingular algebraic CMC surfaces in $\R^3$ are only the planes, spheres and cylinders. The present paper gives a generalization of this result to a larger setting where we consider CMC surfaces globally subanalytic in $\R^3$. In particular, one can find here a  proof for  semialgebraic connected nonsingular CMC surfaces. 
 
In the Section 2 we make a brief introduction to the theory of globally subanalytic sets for people who does not work in Real Algebraic Geometry. All this can be found in \cite{DM} or \cite{D}, and we will try to give explicit references where we can find most of results we use.

\section{Globally subanalytic sets}

In this section, we do a brief exposition of globally subanalytic sets following closely the paper \cite{D}.

\begin{definition}[Semianalytic sets] \rm{A subet $X\in\R^n$ is called \emph{semianalytic at} $x\in\R^n$ if there exists an open neighborhood $U$ of $x$ in $\R^n$ such that $U\cap X$ can be written as a finite union of sets of the form}
$ \{ x\in\R^n \ | \  p(x)=0, q_1(x)>0, \dots,q_k(x)>0\} $, \rm{where $p,q_1,\dots,q_k$ are analytic functions on $U$. A subset $X\subset\R^n$ is called \emph{semianalytic in} $\R^n$ if $X$ is semianalytic at each point $x\in\R^n$}.
\end{definition}

\begin{definition}[Subanalytic sets] \rm{ A subset $X\subset\R^n$  is called \emph{subanalytic at} $x\in\R^n$ if there exists an open neighborhood $U$ of $x$ in $\R^n$ and a bounded semianalytic subset $S\subset\R^n\times\R^m$, for some $m$, such that $U\cap X=\pi(S)$ where $\pi :\R^n\times\R^m\rightarrow\R^n$ is the orthogonal projection map. A subset $X\subset\R^n$ is called \emph{subanalytic in} $\R^n$ if $X$ is subanalytic at each poin of $\R^n$}
\end{definition}

\begin{example}\label{semialgebraic}\rm{
 The connected components of algebraic subsets of $\R^n$ are subanalytic subsets of $\R^n$. More generally, semialgebraic subsets of $\R^n$ are subanalytic subsets in $\R^n$( for the definition of semialgebraic sets, see \cite{C} page 24). }
\end{example} 

\begin{example}\rm{ Let $f :\R^n\rightarrow\R^m$ be an analytic function. Then, the zero set $X=\{x\in\R^n \ | \ f(x)=0\}$ is a subanalytic subset in $\R^n$. }
\end{example}

This example above shows us that some subanalytic subsets do not have finite topology. For instance, if $X\subset\R^2$ is the zero set of the function $\sin(x^2+y^2)$, then $X$ is a subanalytic set with infintely many connected components. 

\begin{definition}\rm{ A subset $X\subset\R^n$ is called \emph{globally subanalytic} if its image under the map, from $\R^n$ to $\R^n$,
$$(x_1,\dots,x_n)\mapsto (\frac{x_1}{\sqrt{1+x_1^2}},\dots,\frac{x_n}{\sqrt{1+x_n^2}})$$ is a subanalytic subset in $\R^n$.}
\end{definition}

\begin{example}\rm{  A map $f\colon\R^n\rightarrow\R^m$ is called a \emph{semialgebraic map} if its graph is a semialgebraic subset of $\R^n\times\R^m$. The Tarski-Seidenberg Theorem (see \cite{C} pages 26-27)  says that the image of the semialgebraic sets by the semialgebraic maps are also  semialgebraic sets. We use the Tarski-Seidenberg Theorem to provide an argument that the semialgebraic subsets of $\R^n$ are globally subanalytic sets. In fact, Let $X$ be a semialgebraic subset of $\R^n$. Since the map}

$$\begin{array}{cccc}
\varphi \ : & \! \R^n & \! \longrightarrow
& \! \R^n \\
& \! (x_1,\dots,x_n) & \! \longmapsto
& \! (\frac{x_1}{\sqrt{1+x_1^2}},\dots,\frac{x_n}{\sqrt{1+x_n^2}})
\end{array} 
$$

\noindent \rm{is a semialgebraic map, by Tarski-Seidenberg Theorem, $\varphi(X)$  is a semialgebraic subset of $\R^n$. By  Example \ref{semialgebraic}, $\varphi(X)$ is a subanalytic subset of $\R^n$. Finally, by definition, $X$ is globally subanalytic.}
\end{example}

According to the  next example, we see that the class of globally subanalytic sets is much larger than the class of semialgebraic ones.

\begin{example}\rm{ Let $p :\R^n\rightarrow\R^m$ be a rational map such that its image is included in a compact Euclidean ball $B$. Let $f : B\rightarrow\R$ be a function defined as a restriction to $B$ of an analytic function  $F: U\rightarrow\R$ where $U$ is an open neighborhood of $B$. Then, the graph and the level sets of the composite function $g=f\circ p$ are globally subanalytic subsets.}
\end{example}

\begin{definition}\rm{Let $X\subset\R^n$ be a globally subanalytic. A map $f:X\rightarrow\R^k$ is called a \emph{globally subanalytic map} if its graph is a globally subset of $\R^n\times\R^k$.}
\end{definition}

\begin{example}\rm{ The function $f$ defined by $(x,y)\mapsto \displaystyle \cos(\frac{1}{1+x^2+y^2})$ is a globally subanalytic function. In particular, the graph of $f$ is a surface of class $C^{\omega}$, properly embedded in $\R^3$, which  is a globally subanalytic set. }
\end{example} 

\begin{example}\rm{ Let $X\subset\R^n$ be a globally subanalytic subset. The distance function to $X$, from $\R^n$ to $[0,\infty)$, defined by, $p\mapsto \rm{dist}(p,X)$, is a globally subanalytic function.}
\end{example} 

In order to show the strength of the theory of globally subanalytic sets, let us list some important  properties.

\begin{enumerate}

\item {\bf Triangulability} (see \cite{L}): {\it Let $X\subset\R^n$ be a compact subanalytic subset. Let  $B_i$, $i = 1, . . . , k,$ be suabanalytic subsets of $X$. Then there exist a finite simplicial
complex $K$  and a subanalytic homeomorphism $\Phi : |K| \rightarrow X$ such that each $B_i$ is a union of images by $\Phi$ of open simplices of $K.$}

\item {\bf Piecewise Monotonicity} (see \cite{D} page 192): {\it  if $f\colon(a,b)\rightarrow\R$ is a globally subanalytic function, then there exists a partition $a=a_0<a_1<\cdots<a_m=b$ such that on each interval $(a_i,a_{i+1})$ the function $f$ is either continuous and strictly monotone or constant.}

\item {\bf Polynomial Growth} (see \cite{D} page 192) : Given a globally subanalytic function $ f: (x_0, \infty )\rightarrow \R$, there are $d \in \N$ and $a > 0$ such that $ |f(t)|< t^d$ for $t > a.$


\end{enumerate}

In order to finish this section, as a consequence of the Polynomial Growth Property,  let us point out that exponential functions $\R\rightarrow\R$ are not globally subanalytic functions, as one can see in the next result. 

\begin{proposition}\label{proposition1} Let $f\colon (0,\infty)\rightarrow\R$ be a globally subanalytic function such that there exist positive real numbers $c$ and $\lambda$;  $0\leq f(R_n)\leq c e^{-\lambda R_n},$ where $\{R_n\}$ is a sequence of positive real numbers such that $R_n\to \infty$. Then, $f(R)=0$ for all $R$ large enough. 
\end{proposition}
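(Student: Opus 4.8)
The plan is to play the two growth properties listed above against each other. The exponential decay $0\le f(R_n)\le ce^{-\lambda R_n}$ along the sequence $R_n\to\infty$ is faster than any polynomial rate, whereas a globally subanalytic function that is eventually nonzero can be bounded \emph{below} by a negative power of $t$. These two facts will be incompatible, and the contradiction will force $f$ to vanish identically for large $R$.

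First I would reduce to the case in which $f$ has a definite sign near infinity. Since $t\mapsto 1/t$ is semialgebraic, the function $\tilde f(s)=f(1/s)$ is again globally subanalytic on $(0,\infty)$, so Piecewise Monotonicity applied to $\tilde f$ on $(0,1)$ produces a point below which $\tilde f$ is continuous and either strictly monotone or constant. Translating back, there is $a>0$ such that $f$ is continuous on $(a,\infty)$ and there either strictly monotone or constant. Enlarging $a$ past the at most one zero of a strictly monotone branch, I may assume that on $(a,\infty)$ either $f\equiv 0$, in which case the statement already holds, or $f$ has no zeros. In the latter case $f$ has constant sign on $(a,\infty)$; since $R_n\to\infty$ forces $f(R_n)\ge 0$ for all large $n$, this sign must be positive, so $f(t)>0$ for every $t>a$.

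Next I would use Polynomial Growth in reverse. Because $f$ is positive and globally subanalytic on $(a,\infty)$, its reciprocal $1/f$ is a globally subanalytic function there (the class is closed under taking reciprocals where the denominator does not vanish), and Polynomial Growth yields $d\in\N$ and $a'>a$ with $1/f(t)<t^{d}$, that is $f(t)>t^{-d}$, for all $t>a'$. Evaluating along the sequence gives $R_n^{-d}<f(R_n)\le ce^{-\lambda R_n}$ for all large $n$, hence $e^{\lambda R_n}<c\,R_n^{d}$. This is impossible, since $R_n\to\infty$ and the exponential $e^{\lambda R}$ outgrows every power $R^{d}$. The contradiction rules out the nonvanishing alternative, so $f\equiv 0$ on $(a,\infty)$, which is exactly the assertion.

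The main obstacle is the very first reduction: converting the hypothesis, which controls $f$ only along a discrete sequence, into information about $f$ on an entire neighbourhood of infinity. This is precisely where the globally subanalytic hypothesis is indispensable, and I would rely on the tameness it provides, namely finitely many intervals of monotonicity and closure under reciprocals, rather than on any pointwise estimate. Once eventual positivity is secured, the clash between the polynomial lower bound $t^{-d}$ and the exponential decay $ce^{-\lambda R_n}$ is immediate.
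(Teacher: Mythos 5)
Your proof is correct and takes essentially the same route as the paper: assuming the conclusion fails, use Piecewise Monotonicity to secure eventual strict positivity of $f$, then apply Polynomial Growth to the reciprocal $1/f$ to get $f(t)>t^{-d}$ and derive the impossible inequality $e^{\lambda R_n}<c\,R_n^{d}$. The only differences are cosmetic: you make explicit the sign analysis and the change of variable $s\mapsto f(1/s)$, details the paper leaves implicit.
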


\begin{proof}If the proposition is false for the function $f$,  by the Piecewise Monotonicity Property, there exists a positive real number $x_0$ such that $f(R)>0$ for all $R>x_0$. Let $g\colon[x_0,\infty)\rightarrow(0,\infty)$ be defined by $g(t)=1/f(t)$. Since $g$ is a globally subanalytic function, by the Polynomial Growth Property, there are $d \in \N$ and $a > 0$ such that $ 0<g(t)< t^d$ for $t > a$, what, jointly with the hypothesis, implies that $e^{\lambda R_n}\leq cR_n^d, \ \forall n$. Since  $R_n\to \infty$, the last inequality is an obvious absurd.   
\end{proof}

\section{Main theorem and its proof}

Let $X\subset\R^3$ be a surface non-singular, analytic and closed  in $\R^3$. Let us suppose that $X\subset\R^3$ is globally subanalytic. First, we are going to show that $X$ has finite topology.  In fact, we prove a  more general result, that is, all globally subanalytic sets have finite topology. 

\begin{lemma}  X has finite topology.
\end{lemma}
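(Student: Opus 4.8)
The plan is to transport the problem into a compact setting, where the Triangulability property applies directly. The key observation is that the map $\varphi\colon\R^n\to\R^n$ appearing in the definition of globally subanalytic sets is a diffeomorphism onto the open box $(-1,1)^n$, since each coordinate map $t\mapsto t/\sqrt{1+t^2}$ is a strictly increasing diffeomorphism of $\R$ onto $(-1,1)$. Consequently $X$ is homeomorphic to its image $\varphi(X)$, which by the definition of global subanalyticity is a bounded subanalytic subset of $\R^n$. It therefore suffices to prove that a bounded subanalytic set has finite topology.

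First I would pass to the closure $Y=\overline{\varphi(X)}$. Since $\varphi(X)$ is bounded, $Y$ is compact; and since the closure of a subanalytic set is again subanalytic, $Y$ is a compact subanalytic subset of $\R^n$, inside which $\varphi(X)$ sits as a subanalytic subset. Next I would apply the Triangulability property to the compact subanalytic set $Y$, taking the single distinguished subset $B_1=\varphi(X)$. This produces a finite simplicial complex $K$ and a subanalytic homeomorphism $\Phi\colon|K|\to Y$ under which $\varphi(X)$ is realized as a union of images of open simplices of $K$.

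Because $K$ has only finitely many simplices, $\varphi(X)$ is a finite union of such cells; in particular it has finitely many connected components and finitely generated homology, that is, finite topology. Since $X\cong\varphi(X)$ via $\varphi$, the same conclusion holds for $X$, proving the lemma.

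The main obstacle I anticipate is purely a matter of invoking the correct stability facts: one must know that the closure of a bounded subanalytic set is subanalytic and compact, and that $\varphi(X)$ appears inside $Y$ as a genuine subanalytic subset, so that it is captured as a union of open simplices by the triangulation. Once these standard closure properties are in hand, the finiteness of the complex $K$ does all the work, and neither the Piecewise Monotonicity nor the Polynomial Growth property is needed at this stage.
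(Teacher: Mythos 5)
Your proposal is correct in outline, but it takes a genuinely different route from the paper, and one step at the end is stated more breezily than it deserves. The paper compactifies $X$ by adding a \emph{single point}: it pulls $X$ back under stereographic projection from a sphere $S\subset\R^4$ (using that $X$ is closed and unbounded in $\R^3$, so that $\tilde X=\phi^{-1}(X)\cup\{p\}$ is precisely the closure of $\phi^{-1}(X)$), triangulates the compact subanalytic set $\tilde X$ with $p$ as a vertex, and concludes that $X$ is homeomorphic to a compact polyhedron minus finitely many points. You instead compactify by taking the closure $Y=\overline{\varphi(X)}$ inside the box $(-1,1)^n$ and triangulate the pair $(Y,\varphi(X))$, concluding that $X$ is homeomorphic to a finite union of open simplices. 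Your version has the advantage of working for an arbitrary globally subanalytic set --- no closedness of $X$ in $\R^3$ is used --- which matches the generality the paper announces informally (``all globally subanalytic sets have finite topology''); the paper's own argument genuinely needs $X$ closed, since otherwise the one-point compactification would not be the closure and compactness would fail. On the other hand, the paper's version buys a stronger and more directly usable conclusion: a compact polyhedron minus a finite set of points is exactly the notion of finite topology (finitely many ends, each with a cone neighborhood) that is invoked later when the ends of $X$ are analyzed, and its homotopy finiteness is evident, since deleting a vertex leaves a space that deformation retracts onto the finite subcomplex of simplices not containing that vertex.

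The soft spot in your argument is the final inference ``a finite union of open simplices has finitely many components and finitely generated homology.'' Finiteness of components is clear, but the homology claim is not immediate: a union of open simplices need not be a subcomplex, need not be locally compact (consider an open $2$-simplex together with one of its vertices), and does not visibly deformation retract onto a finite complex. The statement is true, but it requires either an induction via Mayer--Vietoris over open stars or a citation of the standard fact that semialgebraic (equivalently, definable) sets have the homotopy type of a finite complex; as written, ``in particular'' hides the only nontrivial point your route leaves open. Relatedly, if the lemma is to feed into the main theorem, what is actually used there is that the surface $X$ has finitely many ends; from your conclusion this follows only after invoking the classification of noncompact surfaces (finitely generated $H_1$ together with finitely many components implies homeomorphic to a compact surface minus finitely many points), whereas the paper's conclusion gives it on the nose.
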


\begin{proof} If $X$ is compact, we have nothing to do. Then let us suppose that $X$ is not compact. Let $S\subset\R^4$ be a round sphere. Let $p\in S$ and $\phi : S\setminus \{p\}\rightarrow\R^3$ be the stereographic projection associated to the point $p\in S$. Since $\phi$ is a rational homeomorphism, $\phi^{-1}(X)$ is a subanalytic subset of $S$ and its compactification, $\tilde{X}=\phi^{-1}(X)\cup \{p\}$, is a compact subanalytic subset of $S$. By the Triangulability Property (see Section 2), we can take a finite triangulation $\tau$ of $\tilde{X}$ where $p$ is a vertex of $\tau$. Thus, removing the point $p$, we see that $X$ is homeomorphic to a compact polyhedron minus a finte subset of points.
\end{proof}

\begin{theorem} If $X$ has constant mean curvature, then $X$ is a plane, a sphere or a right circular cylinder.
\end{theorem}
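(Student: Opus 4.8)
The plan is to reduce the theorem to the classical classification of \emph{isoparametric} surfaces: the only surfaces of $\R^3$ with both principal curvatures constant are planes, round spheres and right circular cylinders. Since $H=\tfrac12(\kappa_1+\kappa_2)$ is already constant, it suffices to prove that the function $u:=H^2-K=\tfrac14(\kappa_1-\kappa_2)^2\ge 0$ is constant on $X$, where $K$ is the Gaussian curvature. Here $u$ is a globally subanalytic function: the class of globally subanalytic sets is o-minimal and closed under differentiation, so the Gauss map, the second fundamental form, the principal curvatures $\kappa_i$, and hence $K$ and $u$, are all globally subanalytic on the analytic globally subanalytic surface $X$. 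If $X$ is compact the conclusion is immediate: a compact connected embedded CMC surface is a round sphere by Alexandrov's theorem \cite{A}. I therefore assume from now on that $X$ is noncompact.

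By the Lemma, $X$ has finite topology; being closed in $\R^3$ it is properly embedded, so it has a compact core and finitely many annular ends $E_1,\dots,E_r$. The role of the subanalytic hypothesis on each end is first to forbid oscillation at infinity. Restricting $u$ to a proper arc running to infinity in $E_j$ yields a globally subanalytic function of one real variable, which by the Piecewise Monotonicity Property is eventually monotone and hence tends to a limit $\ell_j\in[0,\infty]$. Thus $u$ cannot oscillate along an end, and the same applies to any one-parameter subanalytic quantity attached to the end, such as its distance to a fixed axis.

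The key input from CMC theory is the structure of the ends. When $H\neq 0$, a properly embedded annular CMC end of finite topology is cylindrically bounded and asymptotic, with \emph{exponential} rate, to a Delaunay surface (Korevaar--Kusner--Solomon). On a nondegenerate Delaunay surface the neck radius, and with it $u$, is a nonconstant \emph{periodic} function of the axial parameter, and such oscillation contradicts the eventual monotonicity obtained above. Hence the asymptotic model of each end must be the degenerate Delaunay surface, namely the right circular cylinder $C$ of radius $1/(2H)$, so that $\ell_j=H^2$. Now let $C$ be this limiting cylinder and consider $f(R)=\sup\{\,\mathrm{dist}(p,C)\;:\;p\in X,\ |p|=R\,\}$: this is globally subanalytic, since the distance to the subanalytic set $C$ is globally subanalytic and the supremum over the subanalytic slice $\{|p|=R\}$ is again globally subanalytic. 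The exponential convergence to the model gives $0\le f(R_n)\le c\,e^{-\lambda R_n}$ along a sequence $R_n\to\infty$, so Proposition \ref{proposition1} forces $f(R)=0$ for all large $R$. Thus the end $E_j$ lies on $C$ outside a compact set; since $X$ and $C$ are connected analytic surfaces sharing an open piece, unique continuation propagates this to all of $X$, and completeness gives $X=C$. The minimal case $H=0$ is treated in the same spirit: catenoidal ends are excluded by the Polynomial Growth Property (their profile grows like $\cosh$) and helicoidal ends by Piecewise Monotonicity (periodicity of the ruling), so every end is asymptotically planar; the resulting finite-total-curvature surface with planar ends is then forced to be a plane, reducing if necessary to the algebraic case of Barbosa--do Carmo \cite{BC}.

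I expect the main obstacle to be precisely this end analysis: converting ``globally subanalytic and CMC'' into control of the asymptotic geometry. Concretely one must (i) verify that $u$ stays bounded along the ends so that the limits $\ell_j$ are finite, (ii) identify the asymptotic model, and, above all, (iii) secure the exponential convergence rate that feeds Proposition \ref{proposition1}; steps (ii)--(iii) rest on the structure theory of CMC ends, while the subanalytic hypothesis contributes exactly the two rigidity statements that collapse these models to the flat ones, namely the absence of oscillation (Piecewise Monotonicity) and the absence of genuinely exponential tails (Proposition \ref{proposition1}). The minimal subcase is the most delicate point of the plan, since there the convergence to a planar end is only polynomial and Proposition \ref{proposition1} does not apply directly, so it must be closed by the finite-total-curvature classification rather than by the subanalytic decay argument.
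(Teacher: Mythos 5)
Your proposal follows, in substance, the same route as the paper: Alexandrov's theorem \cite{A} for the compact case; for $H\neq 0$, the Korevaar--Kusner--Solomon structure theorem \cite{KKS} giving exponential convergence of each end to a Delaunay surface; o-minimality to rule out the unduloid; Proposition \ref{proposition1} applied to a definable distance function, followed by analytic continuation, to conclude $X$ equals the cylinder; and for $H=0$, Schoen's expansion \cite{S} plus a growth argument to kill the logarithmic term, leaving planar ends. Two of your local choices genuinely differ from the paper, both defensibly. First, to exclude the unduloid you use oscillation of the curvature quantity $u=H^2-K$ against Piecewise Monotonicity, whereas the paper constructs a half-line $L$ meeting the unduloid in an unbounded sequence of points while staying at definite distance from it along another unbounded sequence, and then applies Proposition \ref{proposition1} to $h(R)=\mathrm{dist}(q_R,\Sigma_R)$; your version works provided you note that the KKS convergence holds in $C^2$ (so that $u$ on the end inherits the periodic oscillation of $u$ on the unduloid) and that the arc along which you restrict $u$ is definable. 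Second, your explicit exclusion of helicoidal ends addresses a point the paper glosses over (it cites Osserman, implicitly assuming finite total curvature of the end), so this is a welcome addition rather than a deviation. Note also that your opening reduction to isoparametric surfaces (proving $u$ constant) is never carried out and is not needed: your own argument, like the paper's, identifies $X$ with the model surface directly.

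The one genuine gap is the conclusion of the minimal case. Having shown every end is asymptotically planar, you assert that the surface is ``forced to be a plane, reducing if necessary to the algebraic case of Barbosa--do Carmo \cite{BC}.'' That reduction is not available: $X$ is globally subanalytic, not algebraic, and nothing in your argument makes it algebraic, so \cite{BC} cannot be invoked. What is needed --- and what the paper supplies --- is the observation that the finitely many asymptotic planes of a properly embedded surface with finitely many ends must be parallel, so that $X$ lies in a slab, after which the Hoffman--Meeks halfspace theorem \cite{HM} forces $X$ to be a plane. Separately, your parenthetical reason for excluding catenoidal ends (``their profile grows like cosh'') should be phrased as in the paper's Claim 2: the graph function itself grows only logarithmically, which contradicts nothing; it is its definable \emph{inverse} --- the radius as a function of height --- that grows exponentially and violates the Polynomial Growth Property.
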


\begin{proof} 
 Let us suppose the mean curvature $H$ of $X$ is a nonzero constant. The case $H=0$ will be analyzed later on. If $X$ is compact, by Aleksandrov's Theorem, $X$ is a sphere.
Thus, the problem is reduced to the case where $X$ is not compact. Let $\Sigma$ be an end of $X$. In other words, let $\Sigma$ be a connected component of the set
$$\{x\in X \ | \ dist(x,0)\geq R_0\}$$ for a sufficiently big fixed $R_0>0$. It follows from the paper \cite{KKS} that there exists a Delaunay surface $D$ which exponentially approximates $\Sigma$, that is, there are $C>0$ and $\lambda >0$ such that
\begin{equation}\label{approximation-nonminimal}
{\rm Hausdorff \ Distance} \ (\Sigma_R,D_R)\leq Ce^{-\lambda R}, \ \forall \ R>0,
\end{equation}  where  
$$\Sigma_R=\{x\in \Sigma \ | \ dist(x,0)\leq R\} \ \mbox{and} \ D_R=\{x\in D \ | \ dist(x,0)\leq R\}.$$ 

{\bf Claim 1.} {\it  $D$ is a right circular cylinder.}

\begin{proof}[Proof of Claim 1] In fact, if $D$ is not a right circular cylinder, by classification of Delaunay surfaces, $D$ is an onduloid. Then, there exists a half-line $L$ satisfying

\begin{itemize}
\item[a)] $L\cap D=\{x_n\}$ is a sequence of points such that $|x_n|\to \infty$;
\item[b)] $\exists$ $\{y_n\}\subset L$ sequence of points and a real number $k>0$ such that $\rm{dist}(y_n,D)>k$ for all $n$, and $|y_n|\to \infty$.
\end{itemize}

For each $R$ large enough, let $q_R$ be the only point in $L$ such that $|q_R|=R$,. Then, the real function $h(R):= \rm{dist}(q_R,\Sigma_R)$ is a globally subanalytic function (see the argument below for the function $f$ in (\ref{function f})). Since $x_n\in D$, it comes from Inequality (\ref{approximation-nonminimal}) that 
$$0\leq h(|x_n|)\leq Ce^{-\lambda|x_n|}, \ \forall \ n.$$ Thus, using Proposition \ref{proposition1}, we get that $h(R)=0$ for all $R$ large enough. Since $|y_n|\to \infty$, we have that $\exists n_0$ such that $y_n\in\Sigma$ for all $n>n_0$. But, this is an absurd, because $\rm{dist}(y_n,D)>k$ for all $n$ and $\rm{Hausdorff \ Distance} (\Sigma_{|y_n|}, D_{|y_n|})\to 0$.
\end{proof}

Once we have proved that  $D$ is a right circular cylinder, we are going to show that $X$ is equal to $D$. In fact, let us denote by $f$ the following function
\begin{equation}\label{function f}
f(R)={\rm Hausdorff \ Distance} \ (\Sigma_R,D_R).
\end{equation}
 Since the graph of $f$ can be defined as the set of points in $\R^2$ that satisfies a first-order formula in the language of the structure of the globally subanalytic sets (see Theorem 1.13 of \cite{C2}),  the function $f$ is globally subanalytic . Since 
$$0\leq f(R)\leq e^{-\lambda R}, \ \forall \ R>0,$$ we have that $f(R)=0$ for $R$ large enough. Thus, by analicity, $D$ is equal to $X$. 

Now, let $H=0$. Let $\Sigma$ be an end of $X$. Without loss of generality we may suppose  the $z$-axis in $\R^3$ is the asymptotic direction of $\Sigma$ (see \cite{O}). Thus, outside a big Euclidean ball in $\R^3$, $\Sigma$ is the graph of a function $z$ defined in $$\{(x,y)\in\R^2 \ : x^2+y^2\geq R^2\}.$$ By the result  of R. Schoen in \cite{S},  there exist constant real numbers $a$, $b$, $c$ and $d$  such that
\begin{equation}\label{approximation-minima}
 z(x,y) = a\log(t) + b + t^{-2}(cx+dy)+O(t^{-2})
\end{equation}
where $t=\sqrt{x^2+y^2}$. 

\bigskip

{\bf Claim 2.} {\it The constant $a$ is equal to zero}.

\begin{proof}[Proof of Claim 2] Let us suppose that $a\not =0$. Without lost generality, we can suppose that $a>0$. Since $z(x,y)$ is a globally subanalytic function, the function $\zeta : [R,\infty)\rightarrow\R$ defined as the restriciton of $z(x,y)-b$ to the line $x=y$, that is,
$\zeta(x)=z(x,x)-b$ is a globallay subanalytic function. By (\ref{approximation-minima}), there exist $\varepsilon>0$ and $R_0>R$ such that $$0\leq \zeta(x)\leq \varepsilon a \log(x) \ \forall \ x\geq R_0.$$  By the Piecewise Monotonicity Property, we can choose $R_0>0$ large enough such that the restriction of $\zeta$ to the interval $[R_0,\infty)$ is a continuous and strictly monotone function. Let $g : [S_0,\infty)\rightarrow [R_0,\infty)$ be the inverse function of $\zeta : [R_0,\infty)\rightarrow\R$, in particular $g$ is a globally subanalytic function. By the last inequality above, $$0\leq g(s)\leq e^{-\frac{s}{\varepsilon a}} \ \forall \ s\geq S_0$$ hence $g(s)=0$ for all $s$ large enough. What is an absurd.
\end{proof}

Once we have proved that $a=0$, we get that  X is asymptotic to the plane $z=b$. Thus,  each end of $X$ is asymptotic to some plane.  Since $X$ is a properly embedded surfaces in $\R^3$ with  finitely many ends, the asymptotic planes to $X$ must be parallel planes.  It follows that there are two planes and $X$ is contained in the region bounded by them. It follows by the halfspace theorem for minimal surface \cite{HM} that $X$ must be a plane. 

\end{proof}

Finally, we would like to observe that the theorem above holds true  if we assume that  the surface $X$ is definable in a polynomial bounded structure on $\R$, instead  of the assumption  that the surface  is a  globally subanalytic set.

\end{document}